\newtheorem{thm}{Theorem}[section]
\newtheorem{prop}[thm]{Proposition}
\newtheorem{Ex}[thm]{Example}
\newtheorem{lemma}[thm]{Lemma}
\theoremstyle{definition}
\newtheorem{dfn}[thm]{Definition}
\title[ Discrete Morse theory for  flexible polygons]{Discrete Morse theory for  moduli spaces of   flexible polygons, or solitaire  game on the circle}
\author{Gaiane Panina and Alena Zhukova }
\address{
  }
 \keywords{Polygonal linkage, cell complex,  configuration space, moduli space,
discrete vector field,   perfect Morse
function }
\begin{document}
\begin{abstract}
We introduce a perfect  discrete Morse function on  the moduli
space of a  polygonal linkage.
The ingredients of the construction are: (1) the cell structure on
the moduli space, and (2) the discrete Morse theory approach, which
gives a way to reduce the number of cells to the minimal possible.

\end{abstract}

 \maketitle \setcounter{section}{0}

\section{Introduction}\label{section_abstr}

A Morse function on a smooth manifold is called \textit{perfect} if the number of critical points equals the sum of Betti numbers. Analogously, a discrete Morse function on a cell complex is called \textit{perfect} if the number of critical cells equals the sum of Betti numbers\footnote{In the paper we always assume that  homology groups and Betti numbers are with coefficients in $\mathbb{Z}$.}.
In a sense, a perfect Morse function (either smooth or discrete) is the optimal one:
all the Morse inequalities turn to equalities, the critical points (critical cells, in discrete framework) represent independent generators of
the homology groups, and therefore,
the number of  critical points (critical cells)
is the minimal possible.
Not every
 manifold admits a perfect Morse functions.
Even if it exists, it is generically  hard to find it. In the discrete setting, it is an NP-hard problem,
see \cite{Jos, Burton}.

In the present paper we explicitly build a perfect  discrete Morse function on  the moduli
space of a   polygonal linkage.

The starting point of our construction is  a cell
decomposition  of the moduli
space constructed in \cite{pan2} and reviewed in the next section. The number of cells is big:
it exceeds the sum of Betti numbers very much. Following R. Forman,
we introduce a discrete Morse function on the cell complex which turns
to be  perfect. According to the discrete Morse
theory, this gives a way of contracting some of the cells such that
the number of remaining cells is the minimal possible. The rules of
manipulating the cells, and the rules describing gradient paths
resemble the solitaire  game. However, this analogy should not be
taken too seriously: it is   a mere metaphor, not a mathematical
statement.

The perfect Morse function is constructed in two steps. On the first
step we introduce some natural pairing on the cell complex which
substantially reduces the number of critical cells.  However the
number of critical cells is not yet minimal possible.

On the second step (following once again R. Forman) we apply \textit{path
reversing technique}, which gives a new Morse function.
This technique is the discrete version of the Milnor-Smale ''First Cancelation Theorem'', see \cite{Milnor}.
Originally it allows to reverse just one gradient path, whereas we reverse many of them at a time. In our
particular case a careful choice of the paths to be reversed yields a perfect discrete Morse function.
It is worthy to mention that the idea of simultaneous reversal  of several gradient paths is not new: it appeared in P. Hersh's paper  \cite{Hersh}.

\medskip

Using our approach, it is possible to compute  homology groups of
the configuration space of a polygonal linkage independently on the
proof of M. Farber and D. Sch\"{u}tz  \cite{faS}. However, such a
proof does not seem to be a short one, so we do not give the details
here.

To the best of our knowledge, no smooth perfect Morse function on
the moduli space   of a  polygonal linkage is known.  This motivates
us to formulate the following {open problem}:

\textit{What is the smooth counterpart of the proposed discrete
Morse function?} We mean here not an existence-type theorem, but a function expressed by some (possibly short)
formula and having some transparent physical or geometrical meaning.

The other question is:

\textit{ Is there a similarity  
with the approaches of E. Babson and P. Hersh \cite{BH, Hersh}?}
\bigskip

\textbf{Acknowledgements.} The present research  is  supported by
RFBR  project No. 15-01-02021. We also express our gratitude to all
the participants of the seminar ''Geometry and combinatorics'' of
Chebyshev Laboratory of St. Petersburg State University for
inspiring discussions. In particular, we are much indebted to Pavel Galashin for producing the
starting  idea of the  pairing.

\section{Preliminaries}\label{section_prelim}

We start with two necessary remindings.
\subsection*{Polygonal linkage: moduli space and the cell complex \cite{pan2}}

A \textit{polygonal  $n$-linkage} is a sequence of positive numbers
$L=(l_1,\dots ,l_n)$. It should be interpreted as a collection of
rigid bars of lengths $l_i$ joined consecutively in a chain by
revolving joints. We always assume that the triangle inequality
holds, that is, $$\forall j, \ \ \ l_j< \frac{1}{2}\sum_{i=1}^n
l_i$$ which guarantees that the chain of bars can close.

 \textit{A planar configuration} of $L$   is a sequence of points
$$P=(p_1,\dots,p_{n}), \ p_i \in \mathbb{R}^2$$ with
$l_i=|p_i,p_{i+1}|$, and $l_n=|p_n,p_{1}|$. We also call $P$  a
\textit{polygon}.

As follows from the definition, a configuration may have
self-intersections and/or self-overlappings.

\begin{dfn}\label{defConfSpace} \textit{The moduli space, or the
configuration space\footnote{Also known as \textit{polygon space}.} $M(L)$}  is the set  of all configurations
modulo orientation preserving isometries of $\mathbb{R}^2$.

Equivalently, we can define $M(L)$ as
$$M(L)=\{(u_1,...,u_n) \in (S^1)^n : \sum_{i=1}^n l_iu_i=0\}/SO(2).$$
\end{dfn}

The latter definition shows that $M(L)$ does not depend on the
ordering of $\{l_1,...,l_n\}$; however, it does depend on the values
of $l_i$.

Throughout the paper  we assume that no configuration of $L$ fits a
straight line. This assumption implies that the moduli space $M(L)$
is a closed $(n-3)$-dimensional smooth manifold. Informally, the dimension of the manifold means  that a polygonal linkage is flexible with degree of freedom  $n-3$.
Smoothness comes in a more tricky way, from Morse theory on the configuration space of \textit{a robot arm}\footnote{It is also possible to prove that $M(L)$ is a manifold by  using some of the angles of a configuration as local coordinates.}, see \cite{F}

The manifolds $M(L)$ appear naturally in topological robotics and are well studied: their homology groups are known \cite{faS},
the Walker conjecture (on retrieving the edge lengths $l_i$ from the cohomology ring of $M(L)$) has been discussed \cite{Walker}, Morse theory has been applied \cite{F, pkh}. However, the existence of a perfect Morse function has not been  established.

\bigskip

An important ingredient of our construction is  the explicit combinatorial description of
$M(L)$ as a  regular  cell complex $\mathcal{K}(L)$.
We first give some notation.
A subset $I$ of $[n]=\{1,2,...,n\}$  is \textit{short} if $$\sum_{i\in I}
l_i<\frac{1}{2}\sum_{i=1}^nl_i.$$

A partition of $[n]=\{1,2,...,n\}$ is called \textit{admissible} if
all the parts are short.

Given a partition, the set containing the entry ''$n$''  is called\textit{ the
$n$-set};
a\textit{ singleton} is a set containing exactly one entry.

\bigskip

 {\bf A remark on notation for a cyclically ordered partition.} For a partition of $[n]$, the $n$\textit{-set }is the set containing the entry $n$.
 We write a cyclically ordered partition of $[n]$
as a (linearly ordered) string of sets where the $n$-set  stands on
the last position.
We stress  that for an ordered partition, the order of the sets matters, whereas
there is no ordering inside a set. For example,
$$\big(\{1\} \{3 \} \{4,  2, 5,6\}\big)\neq\big(\{3 \}\{1\}  \{4,  2, 5,6\}\big)= \big( \{3 \}\{1\}\{ 2,4, 5,6\}\big).$$

Before we describe the cell complex, remind that a CW-complex can be
constructed inductively by defining its skeleta. Once the $(k -
1)$-skeleton is constructed, we attach a collection of closed
$k$-balls $C_i$ by some continuous mappings $\varphi_i$ from their
boundaries $\partial C_i$ to the $(k-1)$-skeleton. For a
\textit{regular } complex, each of  the mappings $\varphi_i$ is
injective, and $\varphi_i$ maps $\partial C_i$ to a subcomplex of
the  $(k-1)$-skeleton. Regularity of a complex implies that a
complex is uniquely defined by the poset of its cells. Regularity
also guarantees the existence of well-defined barycentric
subdivision and (for manifolds) the well-defined dual complex.

\begin{thm}\label{MainThm}
We have  a structure of a regular CW-complex $\mathcal{K}(L)$ on the
moduli space $M(L)$. Its complete combinatorial description reads as
follows:
\begin{enumerate}
    \item  $k$-dimensional cells of the complex $\mathcal{K}(L)$
    are labeled by cyclically ordered admissible partitions of
 the set  $[n]=\{1,2,...,n\}$  into $(n-k)$ non-empty
parts.

    \item A closed cell $C$ belongs to the boundary of some other closed cell
    $C'$  iff  the partition  $\lambda(C)$ is finer than
    $\lambda(C')$.\qed

\end{enumerate}
\end{thm}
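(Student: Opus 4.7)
The plan is to build, for each cyclic admissible partition $\lambda$ of $[n]$, an explicit candidate cell $C_\lambda \subset M(L)$, and then to verify in turn that these cells partition $M(L)$, that each $C_\lambda$ is an open ball of the stated dimension, and that their closure order matches the refinement order on partitions. First I would make the assignment $P \mapsto \lambda(P)$ explicit: given a configuration $P$ with edge vectors $u_1, \dots, u_n \in S^1$, one extracts a canonical cyclic partition by grouping indices according to the combinatorial type of the arrangement of the $u_i$'s on $S^1$ together with the associated closure data, inheriting a cyclic order on the parts and placing the $n$-set last by convention. Admissibility---that every part is short---is forced by the closure relation $\sum l_i u_i = 0$: a part whose edges collectively exceed half the perimeter could not be balanced by the remaining edges.

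Next I would analyze the stratum $C_\lambda = \lambda^{-1}(\lambda)$ for a fixed $\lambda$ with $n-k$ parts and verify that it is an open $k$-ball by producing a smooth parametrization by $k$ independent coordinates; the shortness of each part guarantees transversality of the closure constraint, and the $SO(2)$-quotient absorbs one further parameter. For the face relation, one implication is automatic from continuity of $P \mapsto \lambda(P)$: if $C \subset \partial C'$, then the combinatorial type can only become strictly more refined in the limit, so $\lambda(C)$ is a refinement of $\lambda(C')$. The converse requires, for each refinement $\mu$ of $\lambda$, an explicit one-parameter family in $\overline{C_\lambda}$ whose limit lies in $C_\mu$; the parametrization above should make such degenerations accessible.

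Regularity is then verified by identifying the closed cell $\overline{C_\lambda}$ with a convex polytope---morally a product of simplices associated to the individual parts of $\lambda$---whose face lattice realizes the refinement poset below $\lambda$, so that the attaching map is injective with image a genuine subcomplex. The main obstacle I expect is reconciling the dimension count $\dim C_\lambda = n - \#\text{parts}(\lambda)$ with the extension of the parametrization to the closure: one must verify that the refinement poset below $\lambda$ genuinely carries the combinatorial structure of a $(k-1)$-sphere, essentially that the cells of $\mathcal{K}(L)$ are locally modeled on the cyclopermutohedron restricted to admissible partitions. This combinatorial sphericity is where the real content of the theorem sits and where the detailed verification from \cite{pan2} is essential.
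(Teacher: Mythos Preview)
Your proposal contains a genuine structural error: you have conflated the complex $\mathcal{K}(L)$ with its \emph{dual}. The labeling $P\mapsto\lambda(P)$ you describe---grouping indices by coincidence of the edge directions $u_i$ and recording their cyclic order on $S^1$---is exactly the one the paper uses, but the strata $C_\lambda=\lambda^{-1}(\lambda)$ it produces do \emph{not} have the dimensions you claim. A configuration whose partition has $m$ parts is, after collapsing coinciding edges, an honest $m$-gon; its moduli has dimension $m-3$. So if $\lambda$ has $n-k$ parts, the stratum $C_\lambda$ has dimension $(n-k)-3$, not $k$. Likewise the face relation is the reverse of what you wrote: passing to the boundary of a natural stratum forces additional edge directions to coincide, so the partition becomes \emph{coarser} in the limit, not finer.

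The paper is explicit about this: in the paragraph following the theorem it says that the equivalence classes under the coincidence labeling ``yield a structure of a regular CW-complex which is dual to the complex $\mathcal{K}(L)$.'' Thus $\mathcal{K}(L)$ is obtained from the natural stratification by Poincar\'e duality on the closed $(n-3)$-manifold $M(L)$, which is precisely why its $k$-cells correspond to partitions with $n-k$ parts and why refinement, not coarsening, governs the boundary relation. Your programme---parametrize the strata, check they are balls, match the face poset to refinements, verify regularity via a polytope model---is the right shape for the \emph{dual} complex, and would then need the additional step of passing to the dual (which requires first knowing the natural strata form a regular complex on a manifold). Note also that the paper does not itself prove this theorem; it is imported from \cite{pan2}, and only the informal sketch above is offered here.
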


\begin{figure}[h]
\centering
\includegraphics[width=10 cm]{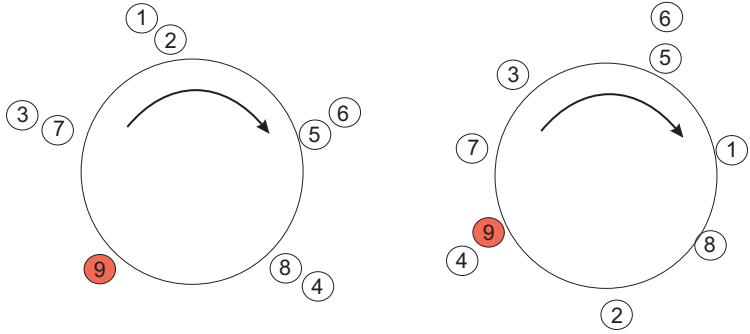}
\caption{A $4$-cell and a $2$-cell. We write these labels as
$\big(\{3,7\}\{1,2\}\{5,6\}\{4,8\}\{9\}\big)$ and
$\big(\{7\}\{3\}\{5,6\}\{1\}\{8\}\{2\}\{4,9\}\big)$} \label{Figcells}
\end{figure}

In the sequel, instead of saying ''the cell of the complex labeled
by $\lambda$'' we say for short ''\textit{the cell} $\lambda$''.

 Given a cell $\lambda$, its facets are obtained by splitting one of
 the parts of the partition $\lambda$ into two non-empty parts.
 For example, the cells $$\big(\{7\}\{3\}\{1,2\}\{5,6\}\{4,8\}\{9\}\big)\hbox{ and }\big(\{3\}\{7\}\{1,2\}\{5,6\}\{4,8\}\{9\}\big)$$
 are facets of the cell $\big(\{3,7\}\{1,2\}\{5,6\}\{4,8\}\{9\}\big)$

\medskip

Let us explain in some more details how the cell structure appears.
We start by putting \textit{labels} on the elements of the configuration space:
according to the Definition \ref{defConfSpace}, each configuration
is a collection of unit vectors $\{u_i\}$.  If the vectors are
different they give a cyclic ordering on the set $[n]$. If some of
the vectors coincide, there arises a cyclically ordered partition of $[n]$,
whose parts correspond to sets of coinciding vectors. By triangle inequality   all
the labels are admissible partitions. Conversely, each admissible partition arises in this way.

Next, we introduce equivalence classes: two points from $M(L)$ (that is, two configurations) are
\textit{equivalent} if they have one and the same label. Equivalence
classes  of $M(L)$   are the \textit{open cells}. The closure of an
open cell (taken in  $M(L)$) is called a \textit{closed cell}; it is homeomorphic to a ball. For a cell
$C$, either closed or open, its label $\lambda (C)$ is defined as
the label of (any) its interior point. The collection of open cells
yields  a structure of a regular CW-complex which is dual to the
complex  $\mathcal{K}(L)$.

\subsection*{Discrete Morse function on a regular cell complex \cite{Forman}}
Assume we have a regular cell complex. By $\alpha^p, \ \beta^p$ we
denote its $p$-dimensional cells, or \textit{$p$-cells}, for short.

 A \textit{discrete vector field} is a set of pairs
$\big(\alpha^p,\beta^{p+1}\big)$
 such that:
\begin{enumerate}
    \item  each cell of the complex participates in at most one
    pair, and
    \item  in each pair, the cell $\alpha^p$ is a facet of $\beta^{p+1}$.

\end{enumerate}

Given a discrete vector field, a \textit{path}  is a sequence of
cells

$$\alpha_0^p, \ \beta_0^{p+1},\ \alpha_1^p,\ \beta_1^{p+1}, \ \alpha_2^p,\ \beta_2^{p+1} ,..., \alpha_m^p,\ \beta_m^{p+1},\ \alpha_{m+1}^p,$$
which satisfies the conditions:
\begin{enumerate}
    \item  Each $\big(\alpha_i^p,\ \beta_i^{p+1}\big)$ is a pair;
    \item
    $\alpha_i^p$ is a facet of $\beta_{i-1}^{p+1}$;
    \item $\alpha_i\neq \alpha_{i+1}$.
\end{enumerate}

A path is  \textit{closed } if $\alpha_{m+1}^p=\alpha_{0}^p$.
A \textit{discrete Morse function on a regular cell complex} is a
discrete vector field without closed paths.

Assuming that a discrete Morse function is fixed, the \textit{critical cells} are those cells of the complex that
  are
not paired. Morse inequality says that we cannot avoid them
completely; our goal is to minimize their number.

A \textit{ gradient path} of a discrete Morse function leading from
one critical cell $\beta^{p+1}$ to some other  critical cell
$\alpha^{p}$ is a sequence of cells satisfying the three above conditions:
$$\beta^{p+1}=\beta_0^{p+1},\ \alpha_1^p, \ \beta_1^{p+1},\ \alpha_2^p,\ \beta_2^{p+1},\ \alpha_3^p,\ \beta_3^{p+1},\ ...,\ \alpha_m^p,\ \beta_m^{p+1},\ \alpha_{m+1}^{p}=\alpha^{p}$$

A discrete Morse function is a  \textit{perfect Morse function}
whenever the number of critical $k$-cells  equals the $k$-th Betty
number of the complex.  It is equivalent to the condition that the
number of all critical cells equals the sum of Betty numbers.

\section{ Pairing on the complex $\mathcal{K}$: ''rules of the game''.}\label{section_pairings}
Assume that a linkage $L=(l_1,...,l_n)$ is fixed. Without loss of
generality we may assume that

$$l_n \geq l_{n-1} \geq ... \geq l_1.$$

First we give some \textbf{notation}:

\begin{enumerate}
    \item By ''$\cdots$'' we denote any (possibly empty) ordered admissible collection of subsets of $[n]$.
\item By ''$*$'' we denote any  (possibly empty) subset of $[n]$.
    \item A set $I\subset [n]$ is $k$-\textit{prelong}, if
 $I$ is short, and  $I\cup\{k\}$ is long.
 \item For a set $I\subset [n]$ and  $k\in [n]$, we write
 $k<I$
 whenever $\forall i\in I, \ \ k<i$.
\item Analogously, we write
$k=Min(I)$
 whenever $k$ is the minimal entry of the set $I$.

\end{enumerate}

\bigskip

Now we describe a  discrete Morse function. The rules of pairing are illustrated in Figure \ref{FigPair}.

\subsection*{Step 1}
We pair together
$$\alpha=\big(\cdots\ \{1\}\ I\ \cdots\big)  \hbox{ and } \beta=\big(\cdots\ \{1\}\cup I\ \cdots\big)$$ iff the following holds:
\begin{enumerate}
    \item  $n \notin I$, and
    \item the set $\{1\}\cup I$  is short.
\end{enumerate}

Before we pass to step 2, observe that the non-paired cells are
labeled  by one of the following types of labels:
$$\big(\cdots\  \{n,1,*\}\big),$$
$$ \big(\cdots \ \{1\}\  \{n,*\}\big)\big),$$
$$\big(\cdots \ \{1\}\ \big(\hbox{a 1-prelong set}\big)\ \cdots\big).$$

\subsection*{Step 2}
We pair together
$$\alpha=\big(\cdots\ \{2\} \ I\ \cdots\big)  \hbox{ and } \beta=\big(\cdots\ \{2\}\cup I\ \cdots\big)$$ iff the following holds:
\begin{enumerate}
    \item $1 \notin I, \ 2 \notin I.$
    \item The set $\{2\}\cup I$  is short.
    \item $\alpha$ and $\beta$ were not paired at the previous step.
\end{enumerate}

We proceed this way for all $k<n$, assuming that the step number $k$
looks as follows:
\subsection*{Step k}
We pair together
$$\alpha=\big(\cdots\ \{k\}\ I\ \cdots\big)  \hbox{ and } \beta=\big(\cdots\ \{k\}\cup I\ \cdots\big)$$ iff the following holds:
\begin{enumerate}
    \item $n \notin I, \ 1 \notin I, \ 2 \notin I, ..., \ (k-1) \notin I.$
    \item $\alpha$ and $\beta$ were not paired at the previous steps.
\end{enumerate}

\begin{figure}[h]
\centering
\includegraphics[width=10 cm]{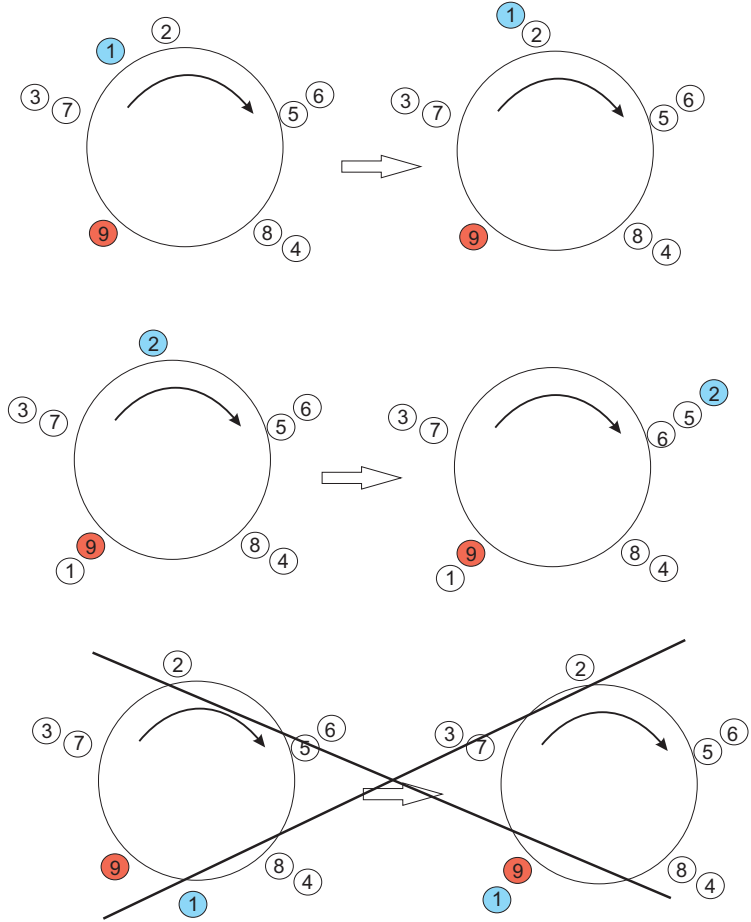}
\caption{Pairing in the complex: examples and
non-example. Take a polygon with $n=9$ and assume that all the sets depicted here are short. The first pairing comes on the  step 1. The second pairing comes on step 2.
The cells at the bottom do not form a pair: by our rules, no entry can enter the $n$-set.}\label{FigPair}
\end{figure}

\bigskip

\subsection*{Pair search algorithm}

It is convenient to have an algorithm that finds a pair (if there is one)
 for a given cell $\alpha$. The algorithm will be a useful tool for finding
 gradient paths.

First observe that if two cells are paired, they differ
  by moving one entry either inside or outside one of the sets. Observe also  that no pairing changes the $n$-set.

An entry  $k\neq n$ is \textit{forward-movable} with respect to the cell
$\alpha$ if it forms a singleton in this cell followed by a set $I$, $n \notin I$ such that
\begin{enumerate}
    \item $k<I$, and
    \item $\{k\}\cup I$ is short.

\end{enumerate}

An  entry $k$ is \textit{backward-movable} if the following holds:
\begin{enumerate}
\item the entry $k$ lies in a non-singleton set $J$, $n \notin J$;
\item $k=Min(J)$;
\item one of the following conditions holds: \begin{enumerate}
\item the set $J$ is preceded by  a non-singleton set;
\item the set $J$ is preceded by  a singleton $\{m\}$ with $m>k$;
\item the set $J$ is preceded by  the $n$-set.
\end{enumerate}\end{enumerate}

In this notation, the \textbf{algorithm} looks as follows:

Given a cell $\alpha$, take the minimal  movable entry $k$ in
$\alpha$. Then the cell $\alpha$ is paired on the step number $k$ with a cell that
is formed from $\alpha $ by moving $k$ either forward or backward.

An immediate corollary of the pairing construction is the following lemma whose informal message is: along a gradient path, ''small'' entries move forward whereas ''big'' entries move backward.
\begin{prop}\label{LemmaChangeOrder}

\begin{enumerate}
  \item Assume we have a gradient path of the described above discrete vector field.
Assume also that $m<k$, and a cell $$\alpha=\big(\cdots\ \{k,*\}\ \cdots\ \{m,*\}\ \cdots\big)$$ belongs to the path
(that is, the entries  $k$ and $m$ belong   to different sets, and  the entry  $k$ is placed to the
left of the entry $m$).

 Then during the gradient path after the cell $\alpha$, the entries $k$ and $m$ never get in one and the same set and never change their
 order.
  \item The introduced discrete vector field is a discrete Morse function.
\end{enumerate}

\end{prop}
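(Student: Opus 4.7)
For part (1), I plan to induct on the number of steps in the gradient path after $\alpha$; it suffices to verify that a single elementary transition $\alpha_i \to \beta_i \to \alpha_{i+1}$ preserves the property ``$k$ lies in a set strictly to the left of $m$'s set''. The up-step merges the singleton $\{j_i\}$ with its successor $I_i$, where $j_i$ is the minimal movable entry of $\alpha_i$ and $j_i < I_i$, $n \notin I_i$. If $j_i \notin \{k,m\}$, the sets containing $k$ and $m$ are untouched. If $j_i = k$, then $k < I_i$ combined with $k > m$ forces $m \notin I_i$, so $k$ stays in a set strictly to the left of $m$. If $j_i = m$, then $I_i$ lies immediately to the right of $\{m\}$, hence to the right of $k$'s set, so $k \notin I_i$ and again $k, m$ remain separated in the correct order. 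For the down-step, some set $J$ of $\beta_i$ is split into two consecutive pieces $J_1, J_2$ in some cyclic order, both occupying the cyclic slot previously filled by $J$. Since $k$ and $m$ lie in distinct sets of $\beta_i$, $J$ contains at most one of them, so the split neither unites them nor alters the cyclic order of their sets.

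For part (2), I plan to exploit part (1) via a monotone invariant. Define
\[
\operatorname{Inv}(\alpha) = \{(k,m) : k > m \text{ and } k\text{'s set lies strictly to the left of } m\text{'s set in } \alpha\}.
\]
Part (1) yields $\operatorname{Inv}(\alpha_i) \subseteq \operatorname{Inv}(\alpha_{i+1})$ along any gradient path, so a closed path would force $\operatorname{Inv}$ to remain constant throughout. Since up-steps introduce no inversions (as $j_i < I_i$), every down-step must then introduce none either, which forces every split of a set $J$ into $J_1, J_2$ (in the chosen cyclic order) along the closed path to be ``sorted'', i.e., $\max J_1 < \min J_2$.

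The argument then concludes by showing that under this sorted-split constraint no continuation as the $\alpha$-side of a pair is possible. A sorted split of the merged set $\{j_i\} \cup I_i$ either recovers $\alpha_i$ (forbidden by $\alpha_{i+1} \neq \alpha_i$) or places $j_i$ inside a non-singleton; splitting any set other than $\{j_i\} \cup I_i$ likewise keeps $j_i$ inside the non-singleton $\{j_i\} \cup I_i$. One then checks that in each situation the set preceding $j_i$'s new set (inherited from $\alpha_i$ or obtained by splitting) satisfies one of the backward-movability conditions (a)--(c), so $\alpha_{i+1}$ is the $\beta$-side of a pair and cannot continue the closed path as the $\alpha$-side. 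I expect this final combinatorial case analysis to be the main obstacle, in particular translating the minimality of $j_i$ in $\alpha_i$ into constraints on the preceding set that survive the sorted split and guarantee the backward-movability of $j_i$ in $\alpha_{i+1}$.
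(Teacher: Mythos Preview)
Your treatment of part~(1) is sound; the only quibble is the phrase ``the sets containing $k$ and $m$ are untouched'' when $j_i\notin\{k,m\}$, since $k$ or $m$ could lie in $I_i$ and thus have its set enlarged --- but the conclusion (different sets, same order) still holds, so this is cosmetic.

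For part~(2) your strategy coincides with the paper's: monotonicity of the inversion set forces all splits along a hypothetical closed path to be sorted. The paper stops here with the one-line remark that in a closed path two entries must interchange their order; you try to make this precise by showing the path dies after a single sorted split. That last inference has a genuine gap. From ``$j_i$ is backward-movable in $\alpha_{i+1}$'' you conclude ``$\alpha_{i+1}$ is the $\beta$-side of its pair'', but this requires $j_i$ to be the \emph{minimal} movable entry of $\alpha_{i+1}$, and a sorted split of a set \emph{other} than $\{j_i\}\cup I_i$ can manufacture a forward-movable entry strictly smaller than $j_i$. Concretely, take $n=9$ with $l_1=\cdots=l_8=1$, $l_9=1.5$, and
\[
\alpha_i=\big(\{1\}\ \{2,6,7,8\}\ \{3\}\ \{4\}\ \{5\}\ \{9\}\big).
\]
Here $1$ is blocked (the set $\{1,2,6,7,8\}$ is long), $2$ is blocked (preceded by the singleton $\{1\}$ with $1<2$), so $j_i=3$ and $\beta_i$ merges $\{3\}$ with $\{4\}$. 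The sorted split of $\{2,6,7,8\}$ into $\{2\},\{6,7,8\}$ yields
\[
\alpha_{i+1}=\big(\{1\}\ \{2\}\ \{6,7,8\}\ \{3,4\}\ \{5\}\ \{9\}\big),
\]
in which $1$ has become forward-movable ($\{1,2\}$ is short). Thus $\alpha_{i+1}$ is the $\alpha$-side of its pair and the path continues, with $\operatorname{Inv}$ unchanged. Your case analysis therefore cannot terminate the path after one step; you need either a global strictly decreasing quantity along sorted-split paths, or an inductive argument tracking how these newly liberated small entries eventually force an unsorted split or a dead end.
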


\begin{proof}  (1) follows from the pairing construction. (2). In a closed gradient
path at least two
entries interchange their order during the path, which contradicts (1).
 \end{proof}

\section{Critical cells of the complex $\mathcal{K}$}
 Let us list all the critical
cells (that is, the cells that are non-paired). They are exactly
those  with empty set of movable entries.

\bigskip

\textbf{Notation}: unlike ''$\cdots$'',  by ''$\spadesuit$'' and
''$\clubsuit$'' we denote  a (possibly empty) string of singletons
going in the decreasing order.  For instance, ''$\spadesuit$'' can be
$\big(\{7\}\{5\}\{3\}\big)$ but  neither
$\big(\{7,5,3\}\big)$ nor $\big(\{5\}\{3\}\{7\}\big).$

We first give examples and next formulate  a  theorem.

\textbf{Examples of critical cells:}
\begin{enumerate}
\item $\big(\{7\}\{5\}\{3\}\{8,1,2,4,6\}\big) $  is a critical cell.

\item $\big(\{5\}\{3\}\{6,4\}\{1\}\{7,2\}\big)$ is a critical cell
assuming that $\{6,4\}$ is $3$-prelong.
\end{enumerate}

\textbf{Non-examples: }

\begin{enumerate}
\item The cell $\big(\{7,5\}\{3\}\{8,1,2,4,6\}\big)$ is non-critical because it
is paired with $\big(\{5\}\{7\}\{3\}\{8,1,2,4,6\}\big).$ Here $5$ is
a movable entry.
\item The cell $\big(\{5\}\{6\}\{3\}\{2\}\{1\}\{8,4,7\}\big)$ is non-critical because it is paired with
$\big(\{5,6\}\{3\}\{2\}\{1\}\{8,4,7\}\big).$ Here singletons do
not come in decreasing order, $5$ is a movable entry.

\item The cell $\big(\{7\}\{5\}\{3\}\{6,2\}\{1\}\{8,4\}\big)$
 is also non-critical. It is paired with
$\big(\{7\}\{5\}\{3\}\{2\}\{6\}\{1\}\{8,4\}\big).$
\end{enumerate}

\begin{thm} The critical cells of the introduced above discrete Morse function are exactly all cells of the two following types illustrated in Figure \ref{FigCritCells}:
\end{thm}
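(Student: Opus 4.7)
My plan is to verify both directions of the claimed equivalence. The forward direction --- that every cell of the two listed forms is critical --- will be a direct check against the movability rules of Section 3: in each such cell, every singleton either precedes another singleton with smaller label, a prelong set, or the $n$-set, each of which blocks forward-movability; and the unique non-singleton non-$n$-set (present only in the second type) is not backward-movable, because the singleton preceding it has label smaller than $\min(J)$, so that none of the clauses (a), (b), (c) of the backward-movability rule applies.

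For the reverse direction, the central step I would establish first is a structural lemma: \emph{a critical cell contains at most one non-singleton set other than the $n$-set}. The argument will use the triangle (admissibility) condition on $L$. Suppose $J$ is a non-singleton non-$n$-set in a critical cell. Since $\min(J)$ is not backward-movable, the only way to block all three clauses is for $J$ to be preceded by a singleton $\{m\}$ with $m < \min(J)$; and for that singleton $\{m\}$ not to be forward-movable in turn, the set $\{m\} \cup J$ must be long. If two such non-singletons $J_1,J_2$ coexisted in one cell, then $\{m_1\}\cup J_1$ and $\{m_2\}\cup J_2$ would be two disjoint long subsets of $[n]$, whose combined length would exceed $\sum_{i=1}^n l_i$ --- a contradiction.

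With this lemma in hand, the reverse direction splits into the two cases matching the two types. In the case without a non-singleton non-$n$-set, the non-movability requirement between consecutive singletons forces them into a decreasing sequence (with the possible ``long pair'' exceptions specific to the linkage), giving the first type. In the case with exactly one non-singleton non-$n$-set $J$, the lemma's proof already supplies the preceding singleton $\{k_s\}$ with $k_s<\min(J)$ together with the $k_s$-prelong condition on $J$, while the singletons before $\{k_s\}$ and between $J$ and the $n$-set are constrained by the same non-movability rules; this yields the second type. The main obstacle I anticipate is articulating the precise conditions on the singletons on each side of $J$ so that they reproduce the pictures in Figure \ref{FigCritCells} exactly; however, the overall architecture is robust to these combinatorial details, since each constraint reduces to one application of the forward- or backward-movability rule.
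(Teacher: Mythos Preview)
Your approach is essentially the paper's, and in fact your structural lemma (at most one non-singleton set besides the $n$-set, via the ``two disjoint long sets'' contradiction) makes explicit something the paper leaves to a one-line ``simple case analysis''. So the architecture is sound and matches the original.

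There is, however, one genuine oversight in your case split. Nothing in the definition of Type~2 forces the prelong set $I$ to be a non-singleton: a singleton $\{j\}$ can be $k$-prelong if $\{k,j\}$ happens to be long (this can occur under the hypotheses on $L$; take for instance $L=(0.1,0.1,2,2,2.1)$ with $n=5$, where $\{4\}$ is $3$-prelong). Such a critical cell consists entirely of singletons outside the $n$-set, so it lands in your ``no non-singleton non-$n$-set'' case --- yet it is of Type~2, not Type~1. Your parenthetical about ``long pair exceptions'' shows you sensed this, but the sentence still concludes ``giving the first type'', which is wrong for those exceptions. The fix is easy: phrase the structural lemma as ``at most one set $I$ (possibly a singleton, but not the $n$-set) such that $I$ is prelong with respect to the singleton preceding it'', and split cases on whether such an $I$ exists. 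The disjoint-long-sets argument you already have covers this uniformly, since $\{m_1\}\cup J_1$ and $\{m_2\}\cup J_2$ are disjoint and long regardless of whether the $J_i$ are singletons.
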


Cells of \textbf{type 1} are labeled by
$$\big(\spadesuit \ \{n,*\}\big).$$

\bigskip

Cells of \textbf{type 2} are labeled by

$$\big(\spadesuit \ \{k\}\  I \ \clubsuit \ \{n,*\}\big),\hbox{ if the following conditions hold:}$$

\begin{enumerate}
    \item $I$ is a $k$-prelong set.
\item $k< I$.
    \item $k< \spadesuit$.

\end{enumerate}

\bigskip

\begin{proof}

Clearly, all the above cells  have no movable entries and therefore
are critical. To prove the converse, consider two cases for a
critical cell $\alpha$:
\begin{enumerate}

\item The partition  $\alpha$ consists only of singletons.
Then the singletons  necessarily go in decreasing order, otherwise
there exists a forward-movable entry. Thus we get a  critical cell
of type $1$.

\item The partition $\alpha$ contains some non-singleton sets.
Each non-singleton is either a prelong set (with respect to its
preceding entry), or the $n$-set; otherwise a simple case analysis
shows the existence of a movable entry.
\end{enumerate}
\end{proof}

\begin{figure}[h]
\centering
\includegraphics[width=10 cm]{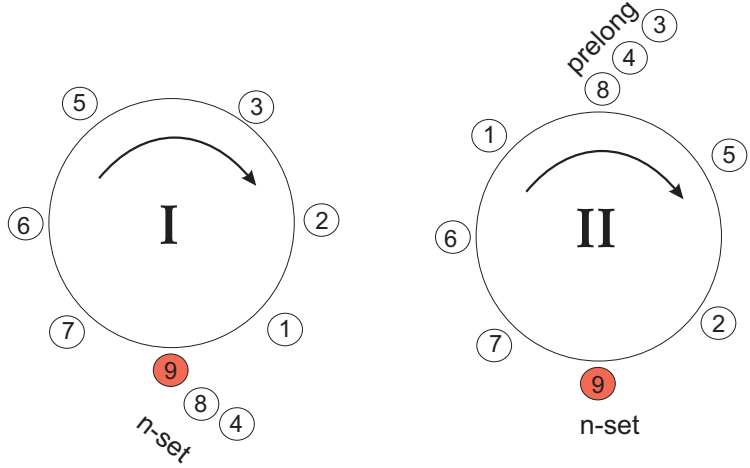}
\caption{Critical cells for $n=9$. We assume that  $\{8,4,3\}$ is $1$-prelong}\label{FigCritCells}
\end{figure}

\begin{Ex}Assume that $L=(1,1,...,1,(n-1-\varepsilon))$. In this case the configuration space $M(L)$ is known to be the $(n-3)-$sphere \cite{F}.
The (only two) critical cells of the Morse function are

$$\big(\{n-1\}...\{3\}\{2\}\{1\}\{n\}\big)$$   and   $$\big(\{1\}\{n-1,...,3,2\}\{n\}\big),$$

 that is, we have a perfect Morse function for
this particular case.

\end{Ex}

\begin{Ex}
Another example when we have a perfect Morse function is given by
$L=\big(\varepsilon,\varepsilon,\varepsilon,...,\varepsilon, 1,1,1)$
The configuration space $M(L)$ equals the disjoint union of two
tori. The critical cells are labeled either by
$$\big(\{n-1\}\{n-2\}\clubsuit\{n,*\}\big), \hbox{(Type 1)}$$
or by
$$\big(\{n-2\}\{n-1\}\clubsuit\{n,*\}\big), \hbox{(Type 2)}$$

so one easily concludes that the number of critical cells of a fixed
dimension $k$ equals the Betti  number $b_k(M(L))$.
\end{Ex}

However, the above two examples are very exceptional: in other cases
the introduced Morse function is far from perfect. Rough estimates
show that the number of critical cells is much bigger than the sum
of Betti numbers.

\section{Gradient paths between critical cells}\label{SecPaths}

 According to the definition, a gradient path between critical cells   is an alternating
sequence of \textit{ join-steps} (pairing $\alpha_i^p$ and
$\beta_i^{p+1}$), and \textit{split-steps} (choosing a facet
$\alpha_{i+1}^p$ of $\beta_i^{p+1}$). A gradient path   between critical cells always starts and ends
by a split.
 A join-step
 decreases the number of sets in the partition  by one, whereas
a split-step  increases the number of sets by one.

Each  join-step is uniquely defined according to our pairing
algorithm: it is performed by moving forward the minimal movable entry.
The entry joins the consecutive
 set in the partition.

Another important remark is that if one starts a series of steps
with a cell $\beta^{p+1}$,  one does not necessarily arrive at some
critical cell $\alpha^p$. This is similar to a solitaire player, who
not always wins, but sometimes gets stuck. Below we exemplify
''successful'' solitaire games. The reader can try some other types
of splitting and work out  some loosing examples.

So we have some freedom for a split-step, but in many cases the
freedom is illusive: if we wish to reach some critical cell at the end of the path, the split-step for a gradient path often is
defined uniquely. Indeed, if after some split-step the smallest
movable entry is backward-movable, there exists no consecutive
join-step.

\medskip

 Assume we have a gradient
path from a  critical cell $\beta=\big(\spadesuit_1\
\{j_1\}\ I_1 \ \clubsuit_1 \ \{n,*_1\}\big)$ to a  critical cell
$\alpha=\big(\spadesuit_2\ \{j_2\}\  I_2 \ \clubsuit_2 \
\{n,*_2\}\big)$.
 We say
that \textit{the prelong set $I$ is maintained during the gradient path
} if each cell of the path has a set containing $I$.  In other
words, during the path, the set $I$ may accept and lose new entries,
but it may not lose its initial entries.

\begin{lemma}\label{LemmaSplitPrelong} Assume we have a gradient
path from a  critical cell \newline $\beta=\big(\spadesuit_1\
\{j_1\}\ I_1 \ \clubsuit_1 \ \{n,*_1\}\big)$ to a  critical cell
$\alpha=\big(\spadesuit_2\ \{j_2\}\  I_2 \ \clubsuit_2 \
\{n,*_2\}\big)$.

  If $I_1 \neq I_2$, then $j_1\neq j_2$, and the entry $j_2$ belongs to $*_1$.

\end{lemma}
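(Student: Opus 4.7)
The plan is to trace the entry $j_2$ backwards along the gradient path, using two rigid features of the construction. (i) Every pairing rule at Step $k$ demands $n \notin I$, so joins never modify the $n$-set; entries leave the $n$-set only via splits and never enter it, whence $*_2 \subseteq *_1$. (ii) Proposition \ref{LemmaChangeOrder} freezes every ``big-to-the-left-of-small'' relation once it appears in any cell of the path.

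Since $j_1 \notin *_1$, proving $j_2 \in *_1$ automatically gives $j_1 \neq j_2$, so it suffices to establish the contrapositive: \emph{if $j_2 \notin *_1$, then $I_1 = I_2$}. Assuming $j_2 \notin *_1$, the entry $j_2$ sits in $\beta$ inside one of $\{j_1\}$, $\spadesuit_1$, $I_1$, or $\clubsuit_1$. In the three sub-cases $j_2 \in \spadesuit_1 \cup I_1 \cup \clubsuit_1$, the position of $j_2$ in $\beta$, together with the position of $j_1$ in $\alpha$ (both pinned down by (ii), using $j_1 \notin *_2$), is incompatible with the type-2 conditions $j_2 < \spadesuit_2$ and $j_2 < I_2$ in $\alpha$. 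Each of these sub-cases thus reduces to $j_2 = j_1$.

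For the sub-case $j_2 = j_1$, I would show $I_1 = I_2$ by combining a length-based inclusion argument with an invariant on the path. The length convention $l_n \geq \cdots \geq l_1$ together with the inequalities $j_1 < I$, $I$ short, $I \cup \{j_1\}$ long forbid one $j_1$-prelong set from strictly containing another: adding any $x$ with $x > j_1$ to a short $j_1$-prelong set $I$ would make $I \cup \{x\}$ long, because $l_x \geq l_{j_1}$. So it suffices to exhibit, throughout the path, an invariant saying that the set immediately succeeding the set currently holding $j_1$ contains $I_1$ (possibly after reabsorption caused by the next algorithmically forced join), and symmetrically that no entry outside the original $I_1$ can lodge permanently in that successor set. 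Terminally this gives both $I_1 \subseteq I_2$ and $I_2 \subseteq I_1$, hence equality. The main technical obstacle is verifying this invariant step by step, exploiting the pair-search algorithm (``smallest movable entry moves forward'') and Proposition \ref{LemmaChangeOrder} to control how splits disperse and joins reassemble the entries of $I_1$ around the singleton $\{j_1\}$.
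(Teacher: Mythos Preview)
Your approach differs substantially from the paper's and is considerably more laborious. The paper does not run a contrapositive or trace $j_2$ forward from $\beta$ through a case split. Instead it isolates a single moment in the path: the join-step at which $I_2$ first becomes contained in one part and stays so until the end (such a step exists precisely because $I_1\neq I_2$). At that step the joining entry is $k=\operatorname{Min}(I_2)$, which is the minimal movable entry; since $j_2<k$, the entry $j_2$ is non-movable there, and a one-line position check leaves only two locations for it: the $n$-set, or somewhere after $I_2$. The second is excluded because no entry can pass through the $n$-set, and $j_2\in *_1$ then follows from the fact that the $n$-set only shrinks along the path. That is the whole proof.

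Your plan is not wrong in spirit, but the sketch under-sells the work at two points. First, the claim that the three sub-cases $j_2\in\spadesuit_1$, $j_2\in I_1$, $j_2\in\clubsuit_1$ are ``incompatible with the type-2 conditions'' via Proposition~\ref{LemmaChangeOrder} alone is optimistic: for instance when $j_2\in\clubsuit_1$ with $j_2>j_1$, the order constraints by themselves do not close the case, and one must bring in an additional length inequality to force a contradiction. Second, the invariant you need for the case $j_1=j_2$ --- that the set following $j_1$ always contains $I_1$, modulo transient splits repaired by the next forced join --- is exactly the delicate part of the whole argument, and you have left it unverified (as you yourself note). The paper's device of jumping directly to the assembly moment of $I_2$ bypasses both the case analysis and the invariant entirely; that idea is what your sketch is missing.
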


\begin{proof}
Consider the join-step after which the set $I_2$ appears
in the path and stays maintained until the end. On this
step,  the entry $k=Min(I_2)$ joins the set $I_2\setminus \{k\}$.
 Since $k$ is the minimal  movable entry at this step, for $j_2<k$  there are only two possibilities: (1) either $j_2$ is in the $n$-set, or  (2) $j_2$ goes after $I_2$. 
 The second case is excluded, since
 no entry can pass through the $n$-set.
\end{proof}

The  lemma together with a case analysis allows us to describe the
gradient paths between critical cells. We do not present the
complete list of all possible gradient paths, since we actually do
not need all of them. The point is that in the next section we are
going to reduce the number of the critical cells using  path
reversing, and arrive at a perfect Morse function.

\begin{prop}
  There are no gradient paths from a critical cell
  of type 1 to a critical cell of type 2.
\end{prop}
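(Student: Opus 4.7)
My plan is to argue by contradiction: assume there is a gradient path from a type~1 critical cell $\beta_0=(\spadesuit_0,\{n,*_0\})$ to a type~2 critical cell $\alpha=(\spadesuit',\{k\},I,\clubsuit,\{n,*'\})$, where $I$ is $k$-prelong and $k<I$. The heart of the argument is to show that the prelong set $I$ is contained in $*_0$; once this is in hand, the contradiction comes from a one-line length comparison.

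I would prove $I\subseteq *_0$ in two successive applications of Proposition~3.1 (the order-preservation lemma). First, set $i_1=\min I$ and suppose for contradiction that $i_1\notin *_0$. Then $i_1=s_j$ is some singleton of $\spadesuit_0$. Since $k<I$ we have $k<s_j$, and a short case analysis on where $k$ sits in $\beta_0$ (either $k\in *_0$, or $k=s_{j_0}$ is another singleton which, because $\spadesuit_0$ is strictly decreasing and $s_{j_0}=k<s_j$, must appear to the right of $s_j$) shows that in $\beta_0$ the entry $s_j$ is placed to the left of $k$ in a different part. Proposition~3.1 then preserves this order along the whole path. Yet in $\alpha$ the singleton $\{k\}$ sits immediately to the left of $I$, so $k$ is to the left of $s_j\in I$, a contradiction. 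Hence $i_1\in *_0$.

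A second application of Proposition~3.1 handles the remaining elements of $I$. If some $i_\ell\in I$ with $\ell\neq 1$ came from $\spadesuit_0$, writing it $i_\ell=s_{j'}$ gives $s_{j'}>i_1$ with $s_{j'}$ placed to the left of $i_1\in *_0$ in $\beta_0$; Proposition~3.1 then forces $i_\ell$ and $i_1$ to remain in distinct sets along the entire path, contradicting the fact that they lie together in $I$ inside $\alpha$. Therefore every element of $I$ is in $*_0$.

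The conclusion now follows from a short length comparison. Shortness of $\{n,*_0\}$ yields $l_n+\sum_{i\in I}l_i\leq l_n+\sum_{i\in *_0}l_i<\tfrac12\sum_{i=1}^n l_i$, whereas the fact that $\{k\}\cup I$ is long gives $l_k+\sum_{i\in I}l_i\geq\tfrac12\sum_{i=1}^n l_i$. Subtracting, $l_n<l_k$, contradicting the standing convention $l_n=\max_i l_i$. The only real obstacle is the bookkeeping in the first step: one must carefully handle the two possible positions of $k$ inside $\beta_0$ when ruling out $i_1\in\spadesuit_0$. Once $I\subseteq *_0$ is established, the concluding length inequality is immediate.
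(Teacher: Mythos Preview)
Your argument is correct and follows the same overall strategy as the paper: use Proposition~3.1 to show that the entries of the prelong set $I$ must originate in the $n$-set $*_0$ of $\beta_0$, and then finish with a length comparison contradicting shortness of $\{n,*_0\}$.

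The bookkeeping differs slightly. The paper only shows that $\bigl(I\setminus\{\max I\}\bigr)\cup\{k\}\subseteq *_1$, allowing the single entry $\max I$ to come from $\spadesuit_1$; the concluding inequality then compares $l_{\max I}$ with $l_n$. You instead first pin down $\min I\in *_0$ (via the position of $k$) and from this deduce the stronger inclusion $I\subseteq *_0$, which makes the final comparison $l_k$ versus $l_n$ a one-liner and avoids tracking where $k$ itself lives. Both routes rest on the same lemma and the same length estimate; yours is marginally cleaner in that it never needs the auxiliary claim ``the exceptional element of $I$ coming from $\spadesuit$ must be $\max I$''.
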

Proof. Assume that there is a path leading from  the cell
\newline $\beta=\big(\spadesuit_1 \
 \{n,*_1\}\big)$ to the cell $\alpha=\big(\spadesuit_2 \ \{k\}\  I
\ \clubsuit \ \{n,*_2\}\big).$

Then by Proposition \ref{LemmaChangeOrder},(1),  not more than one singleton
$j$ from $\spadesuit_1$ belongs to $I$. Moreover, since all others
entries of $I$ eventually join it, we have $j=Max(I)$. All other
entries of $I$ and also  $k$  come from $*_1$. So we necessarily
have
$$\big(I\setminus \{Max(I)\}\big) \cup \{k\} \subseteq *_1.$$

The  set $I \cup \{k\}$ is long, therefore ${Max(I)} \cup \{
*_1\}$ is also long, which implies that $\{n, *_1\}$ is long as well. A contradiction. \qed

\bigskip

\begin{prop}\label{PropToReverseGradPath}Assume that   $$\beta =\big(\spadesuit \ \{k\} I \ \clubsuit
\{n,*,j\}\big)\  \hbox{ and } \ \alpha=\big(\spadesuit \ \{k\} I \
\clubsuit \cup\{j\} \ \{n,*\}\big)$$ are critical cells of type 2.
 If $I$ is   $j$-prelong, the cells are connected  by exactly one gradient
 path. During the path, the entry $j$ splits from the $n$-set backward, and  joins
$\clubsuit$,  see Fig. \ref{FigNewRules} for an example.
\end{prop}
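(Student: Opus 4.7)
The plan is to construct the gradient path explicitly, verifying the backward-split claim, and then to rule out any other gradient path from $\beta$ to $\alpha$ by combining Lemma~\ref{LemmaChangeOrder} with the hypothesis that $I$ is $j$-prelong.

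For existence, write $\clubsuit=\{c_1\}\cdots\{c_m\}$ in decreasing order. The path starts with the backward split $\gamma_0=(\spadesuit\,\{k\}\,I\,\{c_1\}\cdots\{c_m\}\,\{j\}\,\{n,*\})$. The pair-search algorithm applied to $\gamma_0$ shows that the only candidate for a movable entry is $c_m$, which is forward-movable iff $c_m<j$: indeed $\spadesuit$ and the $c_i$ with $i<m$ form decreasing chains of singletons; $k$ cannot join $I$ because $I$ is $k$-prelong; $Min(I)$ is not backward-movable since its predecessor $\{k\}$ satisfies $k<Min(I)$; and $j$ is followed by the $n$-set. When $c_m>j$, the cell $\gamma_0$ is already critical and equals $\alpha$. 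Otherwise the pair of $\gamma_0$ is $(\spadesuit\,\{k\}\,I\,\{c_1\}\cdots\{c_{m-1}\}\,\{c_m,j\}\,\{n,*\})$, and the only non-backtracking split that preserves $I$ and $\{n,*\}$ is $\{c_m,j\}\to\{j\}\{c_m\}$, producing a cell of the same shape as $\gamma_0$ but with $j$ shifted one position left. Iterating, $j$ hops leftward along the chain of singletons until the preceding singleton exceeds $j$, and the terminal cell is $\alpha$.

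For uniqueness, Lemma~\ref{LemmaChangeOrder} applied at $\alpha$ forces $I$ and $\{n,*\}$ to stay intact throughout any path from $\beta$ to $\alpha$, so the first split must extract $j$ alone from the $n$-set, either backward or forward. If forward, the resulting cell is $(\{j\}\,\spadesuit\,\{k\}\,I\,\clubsuit\,\{n,*\})$. When $j>k$, Lemma~\ref{LemmaChangeOrder} applied to the pair $(j,k)$ in this cell (with $j$ left of $k$ and $j>k$) forces $k$ to remain right of $j$, contradicting the position of $k$ in $\alpha$. When $j<k$ the Lemma is silent about the pair $(j,k)$; the pair-search algorithm then makes $j$ the minimal movable entry and forces it to hop forward through $\spadesuit$ and into $\{k\}$, reaching $(\spadesuit\,\{k\}\,\{j\}\,I\,\clubsuit\,\{n,*\})$. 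Here the $j$-prelongness of $I$ blocks $j$ from advancing further, and a direct check shows this cell is critical of type~2 and distinct from $\alpha$. Hence no forward-split path reaches $\alpha$, and once the initial backward split is fixed, every subsequent join is uniquely determined by the pair-search algorithm and every subsequent split is forced to be the unique non-backtracking option preserving $I$ and $\{n,*\}$. The main obstacle is exactly the case $j<k$ in the uniqueness argument: Lemma~\ref{LemmaChangeOrder} alone does not rule out the forward split there, and one has to trace the forward-split path several steps and invoke the $j$-prelongness of $I$ to see that it is diverted to a different critical cell.
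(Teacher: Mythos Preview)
Your existence construction is correct and considerably more explicit than the paper's one-line argument, and your case split on the forward versus backward extraction of $j$ is the right way to finish uniqueness. But the hinge of the uniqueness argument --- the claim that $I$ and $\{n,*\}$ stay intact throughout any path from $\beta$ to $\alpha$ --- is not supported by the lemma you cite. Proposition~\ref{LemmaChangeOrder} only says that once a \emph{larger} entry sits to the left of a \emph{smaller} one in separate sets, they never merge and never swap. Applied to pairs of elements of $I$, this forbids splits of $I$ in which some larger element lands left of a smaller one, but it does \emph{not} forbid the ``monotone'' split $I\to\{Min(I)\}\;(I\setminus\{Min(I)\})$, nor more generally any split of $I$ into a lower block followed by an upper block. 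For $\{n,*\}$ the lemma is equally silent: an element $s\in *$ that leaves the $n$-set is smaller than $n$ and lies to its left, which is precisely the configuration about which Proposition~\ref{LemmaChangeOrder} says nothing.

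The paper closes exactly this gap by invoking Lemma~\ref{LemmaSplitPrelong} (really the mechanism in its proof): if $I$ were ever split and later re-formed, then at the re-forming join the entry $Min(I)$ is the minimal movable entry, so the smaller entry $k$ is not movable there --- forcing $k$ either into the $n$-set or to the right of $I$, both impossible since $k$ starts as a singleton left of $I$, no join ever enlarges the $n$-set, and $k$-prelongness prevents $k$ from passing through $I$. That is the missing argument for $I$-intactness. For $\{n,*\}$ the correct (and easy) reason is simply that the pairing rules never move an entry into the $n$-set, so once out, an element of $*$ never returns. With these two intactness claims properly established, the remainder of your proof --- including the $j<k$ forward-split analysis using $j$-prelongness --- goes through, and is in fact more careful than the paper's ``we easily conclude''.
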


\begin{proof}
 We search for possible paths from $\beta$ to $\alpha$. By Lemma \ref{LemmaSplitPrelong}, these paths do not contain splits of the prelong set. So the path starts with the split of the
  $n$-set. We easily conclude that the entry $j$  spits backward.
\end{proof}


\section{Path reversing: new discrete Morse function}

Our next step  is to reduce the number of critical cells
using the following theorem:

\begin{thm}\cite{Forman} Suppose we have a discrete Morse function  with critical
cells  $\alpha$, $\beta$  such that there exists exactly one
gradient path from  $\beta$ to $\alpha$. Then reversing the
direction of this gradient path produces a discrete Morse function
with $\alpha$, $\beta$ no longer critical.\qed
\end{thm}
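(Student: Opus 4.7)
The plan is to argue directly in the combinatorial setting. Write the unique gradient path from $\beta$ to $\alpha$ as
$$\beta = \beta_0^{p+1},\ \alpha_1^p,\ \beta_1^{p+1},\ \alpha_2^p,\ \ldots,\ \alpha_m^p,\ \beta_m^{p+1},\ \alpha_{m+1}^p = \alpha,$$
and define a new discrete vector field $V'$ which agrees with the original field $V$ everywhere outside this path. Along the path, remove the $V$-pairs $(\alpha_i,\beta_i)$ for $i=1,\ldots,m$ and insert the $V'$-pairs $(\alpha_{i+1},\beta_i)$ for $i=0,\ldots,m$. The facet condition for each new pair is built into the definition of a gradient path, and distinctness of the cells along the path (the $\alpha_i$ are pairwise distinct by the definition, and the $\beta_i$ are forced distinct since they share pairings with distinct $\alpha_i$) ensures each cell sits in at most one $V'$-pair. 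By construction $\alpha$ and $\beta$ are now paired, so the ``no longer critical'' assertion is immediate.

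The substantive claim is that $V'$ has no closed paths. I would argue by contradiction: suppose $C$ is a closed $V'$-path. If $C$ used only old $V$-pairs it would already be a closed $V$-path, contradicting the Morse property of $V$; hence $C$ traverses at least one reversed pair $(\alpha_{i+1},\beta_i)$. The idea is then to splice $C$ with the original gradient path to manufacture a second $V$-gradient path from $\beta$ to $\alpha$, contradicting uniqueness.

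More concretely, pick the first reversed pair $(\alpha_{i+1},\beta_i)$ that $C$ uses; just before $\beta_i$, the path $C$ arrives at $\alpha_{i+1}$ via a sequence of old $V$-pairs, so prepending the segment $\beta_0,\alpha_1,\beta_1,\ldots,\alpha_i,\beta_i$ of the original gradient path yields a $V$-gradient path from $\beta$ through $\alpha_{i+1}$. Now follow $C$ forward: as long as $C$ uses only old pairs, the construction continues as an honest $V$-path; at the next occurrence of a reversed pair $(\alpha_{j+1},\beta_j)$, splice in the segment $\beta_j,\alpha_{j+1}$ coming from the original gradient path, etc. Because $C$ is closed, this process eventually returns to $\beta_i$, but by iteratively grafting on tails of the original path one can steer the construction to terminate at $\alpha_{m+1}=\alpha$. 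The resulting $V$-gradient path from $\beta$ to $\alpha$ differs from the original one, since at some step it follows $C$ away from the prescribed $\alpha_{i+1}$ rather than continuing along the original.

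The main obstacle is the bookkeeping in this last paragraph: verifying that the splicing really yields a well-defined $V$-gradient path (each consecutive pair of cells either belongs to a $V$-pair or is a facet relation of the required form) and that the resulting path is genuinely distinct from the original. The bookkeeping is made manageable by observing that $V'$-transitions across reversed pairs always ``run backwards'' along the original gradient path, so each such transition can be exchanged for a forward segment of the original path without breaking the alternating facet/pair structure.
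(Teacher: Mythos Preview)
The paper does not prove this theorem at all: it is quoted from Forman's original paper \cite{Forman} and marked with a bare \qedsymbol. So there is no ``paper's own proof'' to compare against; you are supplying an argument where the authors simply invoke the literature.

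Your setup is the standard one and is fine: the definition of $V'$, the verification that it is a discrete vector field, and the reduction to showing $V'$ has no closed path are all correct. The substantive part --- the splicing argument --- is also the right strategy, but as you yourself flag, the bookkeeping in your last two paragraphs is not quite right. Concretely, after you ``prepend $\beta_0,\alpha_1,\ldots,\beta_i$'' you say to ``follow $C$ forward'' from $\alpha_{i+1}$; but in $C$ the step out of $\alpha_{i+1}$ is the reversed pair $\alpha_{i+1}\to\beta_i$, which is exactly the move you cannot make in a $V$-path. The iterative grafting you describe never gets off the ground in the direction you have written it.

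A cleaner way to organise the same idea: decompose the closed $V'$-path $C$ into maximal ``reversed segments'' (runs $\alpha_j\to\beta_{j-1}\to\alpha_{j-1}\to\cdots\to\beta_{j'}$ along the original path, with decreasing index) and ``$V$-segments'' (runs using only old pairs). There must be at least one $V$-segment, and any such segment starts at some $\beta_{j'}$, steps to a facet $\gamma\notin\{\alpha_{j'},\alpha_{j'+1}\}$, and ends at some $\alpha_k$. Now do a single splice, not an iterative one: if $k\le j'$, the concatenation of $\alpha_k,\beta_k,\ldots,\beta_{j'}$ with this $V$-segment is a closed $V$-path, contradicting that $V$ is Morse; if $k>j'$, the concatenation of the prefix $\beta_0,\ldots,\beta_{j'}$, the $V$-segment, and the suffix $\alpha_k,\beta_k,\ldots,\alpha_{m+1}$ is a $V$-gradient path from $\beta$ to $\alpha$ which differs from the original at the step after $\beta_{j'}$ (it goes to $\gamma\neq\alpha_{j'+1}$), contradicting uniqueness. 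Either way you are done, and no iteration is needed.
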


A necessary warning is: such paths should be reversed one by one,
since reversing one path may create new paths between other
pairs of critical cells. Keeping this in mind, we do not reverse all
the paths that are described in Proposition
\ref{PropToReverseGradPath}, but pose some extra condition.

\bigskip

\subsection*{Path reversing construction.}
{We reverse the path} between two critical cells
  $$\beta=\big(\spadesuit \ \{k\}
I \ \clubsuit \{n,*,j\}\big) \hbox{  and  }\alpha=\big(\spadesuit \
\{k\} I \ \clubsuit \cup\{j\} \ \{n,*\}\big)$$

if and only if the three conditions hold:
\begin{enumerate}
  \item  $j>*$,
  \item $j>\clubsuit$,
  \item $j>k$.
\end{enumerate}

\bigskip

Let us first informally comment on the conditions (1)--(3).
The role of conditions (1) and (2) is to make the resulting vector field satisfy the first axiom.
Indeed, these two conditions imply that a critical cell participates in at most one reversed path.
The condition (3) is also important, but the reasons are less obvious:  the reversal of
all the paths  satisfying the first two conditions
yields a discrete vector field with closed gradient paths.

\bigskip

\begin{figure}\label{FigNewRules}
\centering
\includegraphics[width=14 cm]{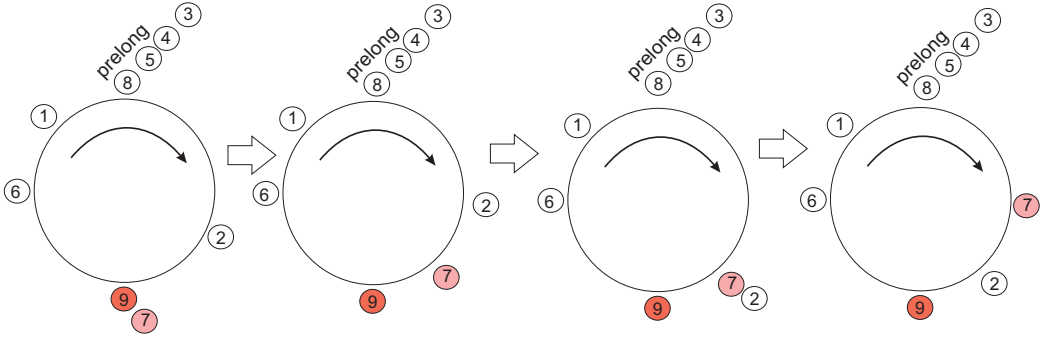}
\caption{An example of a path which is reversed}\label{FigFinalCrit}
\end{figure}

The \textbf{critical cells} that survive path reversing are
(See
Figure \ref{FigFinalCrit}):\begin{enumerate}
    \item All the cells of type 1, and

    \item  All the cells $\big(\spadesuit \ \{k\}
I \ \clubsuit \{n,*\}\big)$ of type 2 such that
$$k>*,\ \hbox{and} \   k>\clubsuit.$$
\end{enumerate}

\begin{figure}[h]
\centering
\includegraphics[width=10 cm]{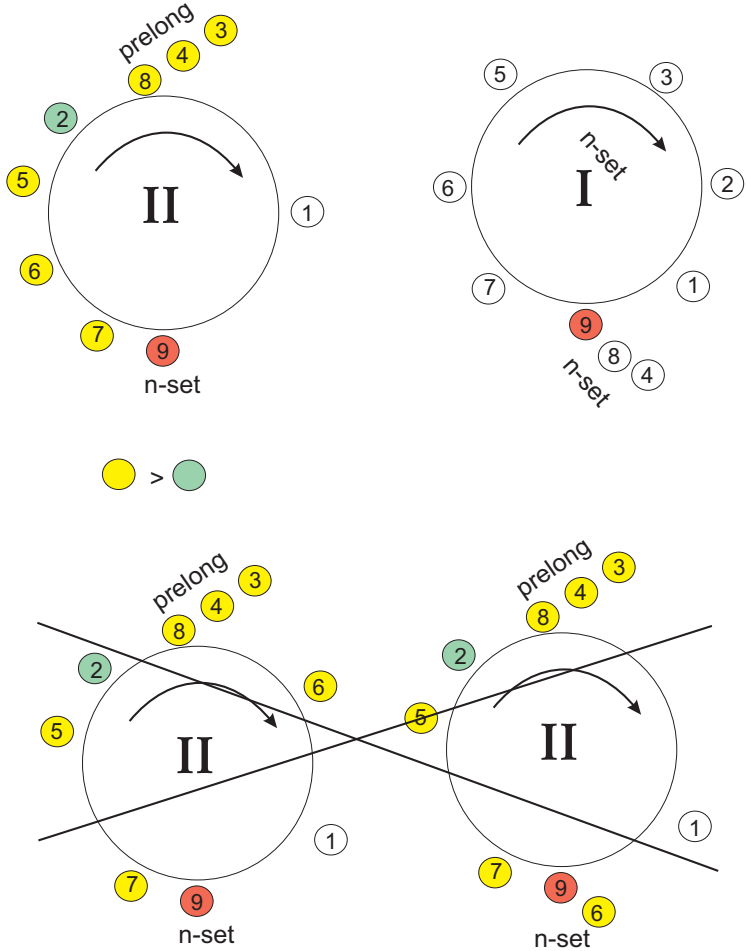}
\caption{Critical cells that survived the path reversing: examples
and non-examples}\label{FigFinalCrit}
\end{figure}

\begin{prop}
The above described path reversing  yields a discrete vector field.
\end{prop}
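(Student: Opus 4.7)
The plan is to verify the two axioms of a discrete vector field for the modified pairing: that every cell appears in at most one pair, and that every pair $(\alpha^p,\beta^{p+1})$ has $\alpha$ as a facet of $\beta$. The facet axiom is automatic, since along each reversed gradient path $\beta=\beta_0^{p+1},\alpha_1^p,\beta_1^{p+1},\ldots,\alpha_{m+1}^p=\alpha$ the new pairing assigns $(\alpha_i^p,\beta_{i-1}^{p+1})$ for $i=1,\ldots,m+1$, and $\alpha_i$ is a facet of $\beta_{i-1}$ by the definition of a gradient path. The real content of the proposition therefore reduces to showing that any cell $\gamma$ lying on some reversed path $P$ determines $P$ uniquely; equivalently, that distinct reversed paths are cell-disjoint, so the various reassignments cannot clash.

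My strategy is to reconstruct the moving entry $j$ directly from $\gamma$'s partition; once $j$ is fixed, the remaining parameters $\spadesuit,k,I,\clubsuit,*$ of the path can be read off from $\gamma$ immediately. Using Proposition \ref{PropToReverseGradPath} and the pairing algorithm, I would first make the path explicit: writing $\clubsuit=\{c_1\}\cdots\{c_r\}$ with $c_1>\cdots>c_r$, the entry $j$ leaves the $n$-set and then shuffles leftward through $\clubsuit$ one singleton at a time, so interior $p$-cells (split phase) take the form $(\spadesuit\{k\}I\{c_1\}\cdots\{c_t\}\{j\}\{c_{t+1}\}\cdots\{c_r\}\{n,*\})$ and interior $(p+1)$-cells (join phase) take the form $(\spadesuit\{k\}I\{c_1\}\cdots\{c_{t-1}\}\{c_t,j\}\{c_{t+1}\}\cdots\{c_r\}\{n,*\})$. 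Then, in each of the four possible positions of $\gamma$ in $P$, I would use conditions (1) and (2) to pin down $j$: when $\gamma=\beta$, condition (1) ($j>*$) forces $j$ to be the largest non-$n$ entry of the $n$-set; when $\gamma=\alpha$ or an interior $p$-cell, condition (2) ($j>\clubsuit$) forces $j$ to be the unique maximum among the singletons sitting between the prelong set and $\{n,*\}$; when $\gamma$ is an interior $(p+1)$-cell, $j$ is the larger element of the unique internal non-singleton lying in the $\clubsuit$ region.

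The main obstacle will be the interior $(p+1)$-cell case, in which one must separate the join pair $\{c_t,j\}$ from the prelong set $I$ using nothing but the intrinsic partition data. I would handle this by first identifying $k$ as the last entry of the initial strictly decreasing string of singletons (this works because $k<\spadesuit$), then $I$ as the set immediately following $\{k\}$, which leaves $\{c_t,j\}$ as the only remaining internal non-singleton and hence makes $j$ its larger element. Putting everything together, each cell belongs to at most one reversed path, so the repairings on different paths cannot conflict, and together with the facet axiom this gives a discrete vector field. Finally, I would note, consistently with the remark in the excerpt, that condition (3) plays no role in the argument above: it is needed only at the next stage, when one verifies that the new vector field has no closed gradient paths and is therefore a genuine Morse function.
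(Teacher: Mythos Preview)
Your proposal is correct and follows the same line as the paper: the paper's proof consists of the single observation that each cell of the complex participates in at most one reversed gradient path, which is exactly the claim you set out to verify, and your reconstruction of $j$ (and hence of $\spadesuit,k,I,\clubsuit,*$) from the partition data of $\gamma$ is a correct way to substantiate that observation. Your remark that condition~(3) is irrelevant here and only enters in the acyclicity argument also matches the paper's own informal comment preceding the proof.
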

Proof. The second axiom of discrete vector field is straightforward. For the first axiom  observe that a cell of the complex participates in at most one gradient path that is reversed.\qed

\bigskip

We stress once again that unlike reversal of one single path, reversal of several gradient paths  does not automatically yield a discrete Morse function. So we have to prove the following proposition. 
\begin{prop}
The above described discrete vector field is a a discrete Morse function.
\end{prop}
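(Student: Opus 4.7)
We argue by contradiction: suppose $\Gamma$ is a closed gradient $V$-path in the new field. Since the original field has no closed paths by Proposition \ref{LemmaChangeOrder}(2), the path $\Gamma$ must use at least one reversed pair. The plan is to decompose $\Gamma$ into maximal alternating reversed and original segments. By Proposition \ref{PropToReverseGradPath}, each reversed segment will be a contiguous portion of a unique reversed path, traversed in the new field from $\alpha=(\spadesuit\ \{k\}\ I\ \clubsuit\cup\{j\}\ \{n,*\})$ toward $\beta=(\spadesuit\ \{k\}\ I\ \clubsuit\ \{n,*,j\})$; along the segment, the distinguished entry $j$ --- satisfying $j>k$, $j>\clubsuit$, $j>*$ by construction --- moves strictly rightward, from a singleton in $\clubsuit$ into the $n$-set.

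The key step will be to select the reversed segment $R^\star\subset\Gamma$ whose distinguished entry $j^\star$ is maximal among all distinguished entries of reversed segments of $\Gamma$, and to track the position of $j^\star$ along $\Gamma$. By the choice of $j^\star$, every other reversed segment of $\Gamma$ moves an entry strictly smaller than $j^\star$ and hence does not relocate $j^\star$. Along every original segment, the ordering invariant of Proposition \ref{LemmaChangeOrder}(1), applied to $j^\star$ and any smaller entry $m<j^\star$ lying in a different set, forbids $j^\star$ from swapping its relative set-order with $m$. During $R^\star$ itself, however, $j^\star$ will sweep rightward past the (strictly smaller) entries of the $n$-set of the initial cell of $R^\star$ and land inside the $n$-set; this would produce a net rightward shift of $j^\star$ across $\Gamma$, contradicting the closedness of $\Gamma$.

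The hard part will be to rule out the scenario in which some original segment of $\Gamma$ splits $j^\star$ back out of the $n$-set (as a singleton immediately before the $n$-set) and then returns it leftward to its starting position inside $\clubsuit$ of $R^\star$ via a chain of original pairs. Using that no entry can pass through the $n$-set (the same principle used in the proof of Lemma \ref{LemmaSplitPrelong}), and that original pairs are driven by the minimal movable entry, we expect to show that any such chain would eventually force $j^\star$ to cross some smaller entry, violating Proposition \ref{LemmaChangeOrder}(1). This is precisely where condition~(3), $j>k$, of the path reversing construction should come into play: without it, the singleton $\{k\}$ to the left of $I$ could serve as a conduit for $j^\star$ to drift back leftward through an original pair, and (as the authors warn) closed paths would indeed appear.
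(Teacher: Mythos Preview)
Your proposal is a plan, not a proof: the passage beginning ``The hard part will be\ldots'' identifies precisely the crux of the matter and then defers it with ``we expect to show.'' Everything before that point is setup; the contradiction is never actually derived.

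There is also a genuine technical gap in the tool you propose to close that hard part. Proposition~\ref{LemmaChangeOrder}(1) controls only the configuration in which a \emph{larger} entry sits to the \emph{left} of a smaller one: it says that along an original gradient path this order is never reversed. What you need, however, is to forbid $j^\star$ (large, sitting in the $n$-set on the \emph{right}) from migrating leftward past the smaller entries of $\clubsuit$. That is the opposite configuration, and Proposition~\ref{LemmaChangeOrder}(1) says nothing about it. Indeed, along original paths large entries routinely drift left (via splits), so some additional structural constraint is required. In the paper this constraint comes from Lemma~\ref{LemmaSplitPrelong}, which guarantees that the prelong set $I$ is maintained and hence acts as a barrier; you never invoke this, and without it there is no evident obstruction to $j^\star$ circulating all the way around.

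For comparison, the paper's argument is organized quite differently. Rather than tracking the \emph{maximal} moved entry $j^\star$, it focuses on the entries $k$ that sit immediately before the prelong sets at the (former) critical cells visited by $\Gamma$, and splits into two cases: either all these $k$'s coincide, or they do not. In the first case Lemma~\ref{LemmaSplitPrelong} forces the prelong set to persist throughout $\Gamma$, creating a two-sided barrier (nothing $>k$ crosses $I$, nothing $<k$ crosses the $n$-set), so no entry can make a full turn. In the second case one tracks the \emph{minimal} such $k$, shows it must eventually re-enter via a reversed path, and obtains a direct contradiction with the reversing condition~(3). Your idea of selecting an extremal entry is in the right spirit, but the paper's choice (minimal~$k$ rather than maximal~$j$) is what makes condition~(3) bite cleanly; with your choice the role of condition~(3) remains only heuristic.
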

Proof.  Assume the contrary: there exists a closed path $\Gamma$. It
can be decomposed into  some reversed and some unreversed gradient
paths between the (former) critical cells. Since a path from type 1
to type 2 never exists, we conclude that all (former) critical cells
that appear in $\Gamma$ are of type $2$. For them there are two
possibilities: either (1) all these former critical cells  have one
and the same entry $k$ preceding the prelong set, or (2) for some of
these (former critical) cells the entries preceding the prelong set
are different. We treat these cases separately.
\begin{enumerate}
\item   Lemma \ref{LemmaSplitPrelong} implies that the prelong set is maintained during the path.
Therefore no entry greater than $k$ passes  through the prelong
set. Also no entry {smaller}  than $k$  passes  through the
$n$-set. So, no entry makes a full turn.

The closed path $\Gamma$ necessarily includes a reversed path. This
means that at some moment, an entry $i$ greater than  $k$ comes from
$\clubsuit$ and joins the $n$-set. Consider the consecutive
split-step. \begin{enumerate}
    \item If some entry $j$ of the $n$-set moves forward, it never
    comes back.
    \item If some entry $j$ of the $n$-set moves backward, $j$ is
    necessarily smaller than  $k$, and the entry $j$ never comes
    back.
\end{enumerate}

\item Assume there are different entries right before the prelong sets in this path.
Let $j$ be the minimal of these entries. At some step of the path,
$j$  leaves the place before
 the prelong set. The entry $j$ is smaller than the next entry that gets to the place before the prelong set, so it can stay neither in
  $\spadesuit$ nor in the prelong set. Therefore, $j$ eventually  joins the $\clubsuit$. The only way for $j$
 to get back leads through the $n$-set, where it can get only via some reversed path.
 Since $j$ is minimal, during that path before the prelong set stands an entry greater
 than $j$, which is impossible, according to the reversing  condition (3).\qed
\end{enumerate}

\begin{thm}
The number of critical cells equals the sum of Betti numbers of the manifold
$M(L)$. Consequently, the above described pairing together with path
reversal gives a perfect Morse function.
\end{thm}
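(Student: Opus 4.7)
The plan is to verify the theorem by counting critical cells in each dimension and matching the counts against the Farber--Schütz formula
\[
b_d(M(L)) \;=\; a_d(L)+a_{n-3-d}(L),
\]
which holds for generic $L=(l_1,\ldots,l_n)$ with $l_1\leq\cdots\leq l_n$; here $a_d(L)$ denotes the number of subsets $T\subseteq[n-1]$ of cardinality $d$ for which $\{n\}\cup T$ is short \cite{faS}. Since the weak Morse inequalities give $\mu_d\geq b_d$ for the number $\mu_d$ of critical $d$-cells, I would produce a dimension-preserving bijection between the critical cells and the disjoint union of the two families counted by $a_d(L)$ and $a_{n-3-d}(L)$.

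First I would dispose of the type 1 critical cells, for which the identification is immediate: a cell $(\spadesuit\,\{n,*\})$ is determined by $*\subseteq[n-1]$, since $\spadesuit$ must list $[n-1]\setminus *$ in decreasing order; admissibility is precisely the shortness of $\{n\}\cup *$, and counting parts yields $\dim=|*|$. Setting $T:=*$ realizes the bijection with the sets counted by $a_d(L)$.

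Next I would treat the surviving type 2 cells. The survival inequalities $k>*$ and $k>\clubsuit$ force every element of $[n-1]$ greater than $k$ into $\spadesuit\cup I$ and every element less than $k$ into $\clubsuit\cup *$, so $\spadesuit=\{k+1,\ldots,n-1\}\setminus I$ is determined, and a surviving cell is encoded by a triple $(k,I,\clubsuit)$ with $I\subseteq\{k+1,\ldots,n-1\}$ a $k$-prelong set and $\clubsuit\subseteq\{1,\ldots,k-1\}$. I would define
\[
T \;:=\; \spadesuit\cup\clubsuit \;\subseteq\; [n-1].
\]
A direct count of parts gives $|T|=n-3-d$, and the longness of $I\cup\{k\}$ is equivalent to shortness of its complement $\{n\}\cup\spadesuit\cup\clubsuit\cup *\supseteq\{n\}\cup T$; hence $\{n\}\cup T$ is short, so the map lands in the family enumerated by $a_{n-3-d}(L)$.

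The hardest part will be inverting this second map. Given $T$ with $\{n\}\cup T$ short, the recipe $I=\{k+1,\ldots,n-1\}\setminus T$, $\clubsuit=T\cap\{1,\ldots,k-1\}$ determines $(I,\clubsuit)$ once $k$ is chosen, so I would prove that there is a unique $k\in[n-1]\setminus T$ for which $I$ is $k$-prelong. For existence, shortness of $\{n\}\cup T$ implies by complementation that $[n-1]\setminus T$ is long, while $\{k+1,\ldots,n-1\}\setminus T$ is empty (hence short) for $k=n-1$; as $k$ decreases, the sets $I_k:=\{k+1,\ldots,n-1\}\setminus T$ grow, and I would select the smallest $k^*$ with $I_{k^*}$ short and $I_{k^*-1}$ long. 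The discrepancy must arise from adjoining $k^*$, which forces $k^*\notin T$ and $I_{k^*}\cup\{k^*\}=I_{k^*-1}$ long, so $k^*$ is a $k^*$-prelong witness. For uniqueness, if $k_1<k_2$ both worked then $k_2\notin T$ yields $k_2\in I_{k_1}$ and hence $I_{k_1}\supseteq I_{k_2}\cup\{k_2\}$, whose longness would force $I_{k_1}$ long, contradicting its shortness. Combining the two bijections gives $\mu_d=a_d(L)+a_{n-3-d}(L)=b_d(M(L))$ in every dimension, saturating the Morse inequalities and proving perfectness.
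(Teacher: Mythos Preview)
Your proposal is correct and follows essentially the same route as the paper: both build the same bijection, associating to a surviving type~2 cell the short set $\{n\}\cup\spadesuit\cup\clubsuit$ (the paper phrases this as ``take all singletons except the one preceding the prelong set, add $n$''), and your threshold choice of $k^*$ coincides with the paper's greedy construction of $I$ from the largest elements of $[n-1]\setminus T$. Your presentation is slightly sharper in that you verify the dimension-graded identity $\mu_d=a_d+a_{n-3-d}$ rather than only the total count, and you make the uniqueness of $k$ explicit; one small point you leave implicit in the inverse direction is that the resulting $n$-set $\{n\}\cup *$ is short, but this follows at once from $\{n\}\cup *\subseteq [n]\setminus(I\cup\{k\})$.
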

Proof. We know from \cite{faS}  that each short set containing the
entry $n$  contributes ''$2$'' to the sum of Betti numbers. So, to
prove the theorem,  we build a bijection between the short sets
containing $n$ and pairs of critical cells.

More precisely, we will show that for every  short set $J$ consisting of $k+1$ elements and containing
the entry   $n$ gives exactly  one $k$-cell of type 1,
and exactly one  $(n-3-k)$-cell of type 2.
\begin{enumerate}
\item \textbf{Cell of type 1}.
Take $J$ as the $n$-set of the (uniquely defined)  critical cell of type $1$.

Conversely, each critical cell of type $1$ gives a short set containing $n$, that is, the $n$-set.

\item \textbf{Cell of type 2.}

 \begin{enumerate}
    \item \textbf{Compose a prelong set $I$. }The set $\overline{J}:=[n]\setminus J$ is long. Take the largest entry of $\overline{J}$ and start
the prelong set $I$ with it. Keep  adding entries from
$\overline{J}$ to $I$ in the decreasing order as long as $I$ stays
short. The process stops once $I$ becomes prelong (that is, one step before it gets long).
    \item \textbf{Specify  an entry preceding $I$.} Let $j$ be the largest of the
 $\overline{J}\setminus I$. Turn $j$ to the singleton that precedes
the prelong set $I$.
    \item \textbf{Compose an $n$-set.} Define the $n$-set as $\big(\overline{J}\setminus (I\cup \{j\}\big)\big)\cup\{n\}$. By construction, each entry in
the $n$-set (except for $n$) is smaller than $j$. Clearly, we get a short set (since the complement is long). 
 \item \textbf{Positions of the rest of the singletons are now defined
 uniquely.} We turn all other entries to
singletons, which are placed before $\{j\}$, if they are
larger than $j$, and  after $I$ if they are smaller than $j$.
 \end{enumerate}

 Now  compute the number of the sets in the partition.
 All entries of $J$ except $n$ turn to singletons. Moreover, we have a singleton $j$ and two non-singleton sets. Altogether we have $k+3$ sets, so the dimension of the cell is
 $(n-3-k)$.

Conversely,  each
critical cell of type 2 of the new Morse function
arises in this way:
assume we have a critical cell of type $2$.
     Take all the singletons except for the singleton that
    precedes the prelong set. Add the entry $n$. Altogether they
    give the short set containing $n$ associated to the cell.

\qed
\end{enumerate}

\subsection*{Two examples}
Let $L=(1,1,1,1,1,1,1)$ be the equilateral 7-linkage.

\begin{enumerate}

\item For the short set  $J=\{7\}$, we have: \begin{enumerate}
\item The associated cell of type 1 is $\big(\{6\} \ \{5\} \ \{4\} \ \{3\} \ \{2\} \ \{1\} \ \{7\}\big);$
\item $\overline{J}=\{1,2,3,4,5,6\}$, $I=\{4,5,6\}$, $j=3$, and the associated cell of type 2 is:
$$\big(\{3\} \ \{4,5,6\} \ \{7,1,2\}\big)$$
\end{enumerate}
\item  For the short set  $J=\{5,6,7\}$, we have: \begin{enumerate}
\item The associated cell of type 1 is $\big(\{4\} \ \{3\} \ \{2\} \ \{1\} \ \{7,5,6\}\big);$
\item $\overline{J}=\{1,2,3,4\}$, $I=\{2,3,4\}$, $j=1$ and the associated cell of type 2 is:
$$(\{6\} \ \{5\} \ \{1\} \ \{2,3,4\} \ \{7\}\big)$$ 
\end{enumerate}

\end{enumerate}

\end{document}